\newtheorem*{theorem-non}{Theorem}
\newtheorem{theorem}{Theorem}
\newtheorem{case}{Case}[section]
\newtheorem{definition}{Definition}[section]
\newtheorem{notation}{Notation}[section]
\newenvironment{proof}[1][Proof]{\noindent\textbf{#1.} }{\ \rule{0.5em}{0.5em}}
\numberwithin{theorem}{section}
\numberwithin{equation}{section}
\begin{document}

\title{{\Large \textbf{Equiform Differential Geometry of Curves\\
in Minkowski Space-Time}}}
\author{\emph{\textbf{H. S. Abdel-Aziz}}, \emph{\textbf{M. Khalifa Saad}}%
\thanks{
~E-mail address:~mohamed\_khalifa77@science.sohag.edu.eg} \ and \emph{%
\textbf{A. A. Abdel-Salam}} \\
{\small \emph{Dept. of Math., Faculty of Science, Sohag Univ., 82524 Sohag,
Egypt}}}
\date{}
\maketitle

\textbf{Abstract.} In this paper, we establish equiform differential
geometry of space and timelike curves in $4$-dimensional Minkowski space $%
E_{1}^{4}$. We obtain some conditions for these curves. Also, general helices with respect to their equiform curvatures are characterized.%
\newline
\textbf{Keywords.} Minkowski 4-space, Equiform differential geometry,
General helices.\newline
\textbf{MSC(2010):} 51B20, 53A35.

\section{Introduction}

In the case of a differentiable curve, at each point a tetrad of mutually
orthogonal unit vectors (called tangent, normal, first binormal and second
binormal) was defined and constructed, and the rates of change of these
vectors along the curve define the curvatures of the curve in Minkowski
space-time \cite{3}. The corresponding Frenet's equations for an arbitrary
curve in the Minkowski space $E_{1}^{4}$ are given in \cite{5,6}. Although
the authors wrote these important equations for the spacelike curves in
Minkowski space-time, which is simultaneously the geometry of special
relativity and the geometry induced on each fixed tangent space of an
arbitrary Lorentzian manifold, there was not a method to calculate all
Frenet apparatus of an arbitrary spacelike curve. In the light of this,
analogy with the method in \cite{9,4}, they have a method to calculate
Frenet apparatus of the spacelike curves with non-null frame vectors
according to signature $(-,+,+,+)$. Among all curves, space and timelike
curves have special emplacement regarding their properties and applicability
in other sciences. Because of this they deserve especial attention in
Euclidean as well as in other geometries. Besides the Euclidean geometry, a
palette of new geometries has been developed over the last two centuries and
some properties of curves and surfaces are more emphasized in newly
developed non-Euclidean geometries than in the Euclidean. Among these
non-Euclidean geometries there is also the Minkowski geometry which
represent ambient geometries in which we investigate the geometry of space
and timelike curves. We define equiform spacelike and timelike curves and
give a characterization of these curves in Minkowski space-time. We found
motivation for this work in \cite{1,2,8}, where the authors considered
characterizations of general helices in the Minkowski space-time,
pseudo-Galilean, double isotropic and Galilean space. We examin a similar
problem in the equiform differential geometry of the Minkowski space-time
for space and timelike curves and these curves will be called equiform
spacelike and timelike curves. The main goal of this article is to define,
describe and characterize equiform spacelike and timelike curves in
Minkowski $4$-space $E_{1}^{4}$.

\section{Preliminaries}

Let $E^{4}=\{\left( x_{1},x_{2},x_{3},x_{4}\right) \left\vert
x_{1},x_{2},x_{3},x_{4}\in \mathcal{R}\right. \}$ be a 4-dimensional vector
space. For any two vectors $x=\left( x_{1},x_{2},x_{3},x_{4}\right) ,\
y=\left( y_{1},y_{2},y_{3},y_{4}\right) $ in $E^{4}$, the pseudo scalar
product of $x$ and $y$ is defined by \ $\langle x,y\rangle
=-x_{1}y_{1}+x_{2}y_{2}+x_{2}y_{2}+x_{2}y_{2}.$ We call $\left(
E^{4},~\langle ~,~\rangle \right) $ a Minkowski $4$-space and denote it by~$%
E_{1}^{4}.$ We say that a vector $x$ in $E_{1}^{4}\backslash \left\{
0\right\} $ is a spacelike vector, a lightlike vector or a timelike vector
if $\langle x,y\rangle $ is positive, zero, negative respectively. The norm
of a vector $x$ $\in E_{1}^{4}$ is defined by $\left\Vert x\right\Vert =%
\sqrt{\left\vert \langle x,x\rangle \right\vert }$. For any two vectors $a;b$
in $E_{1}^{4}$, we say that $a$ is pseudo-perpendicular to $b$ if $\langle
a,b\rangle =0.$ Let $\alpha :I\subset \mathcal{R}\rightarrow E_{1}^{4}$ be
an arbitrary curve in $E_{1}^{4}$, we say that a curve $\alpha $ is a
spacelike curve if $\langle \dot{\alpha}(t),\dot{\alpha}(t)\rangle >0$ for
any $t\in I$. The arclength of a spacelike curve $\gamma $ measured from $%
\alpha (t_{0})\left( t_{0}\in I\right) $ is
\begin{equation}
s(t)=\int\limits_{t_{0}}^{t}\left\Vert \dot{\alpha}(t)\right\Vert dt.
\end{equation}%
Then a parameter $s$ is determined such that $\left\Vert \alpha ^{^{\prime
}}(s)\right\Vert =1$, where $\alpha ^{^{\prime }}(s)=d\alpha /ds.$
Consequently, we say that a spacelike curve $\alpha $ is parameterized by
arclength if $\left\Vert \alpha ^{^{\prime }}(s)\right\Vert =1.$ Throughout
the rest of this paper $s$ is assumed arclength parameter.\newline
For any $x_{1},x_{2},x_{3}\in E_{1}^{4},$ we define a vector $x_{1}\times
x_{2}\times x_{3}$ by
\begin{equation}
x_{1}\times x_{2}\times x_{3}=\left\vert
\begin{array}{cccc}
-e_{1} & e_{2} & e_{3} & e_{4} \\
x_{1}^{1} & x_{1}^{2} & x_{1}^{3} & x_{1}^{4} \\
x_{2}^{1} & x_{2}^{2} & x_{2}^{3} & x_{2}^{4} \\
x_{3}^{1} & x_{3}^{2} & x_{3}^{3} & x_{3}^{4}%
\end{array}%
\right\vert ,
\end{equation}%
where $x_{i}=\left( x_{i}^{1},x_{i}^{2},x_{i}^{3},x_{i}^{4}\right) .$ Let $%
\alpha :I\rightarrow E_{1}^{4}$ be a spacelike curve in $E_{1}^{4}$. Then we
can construct a pseudo-orthogonal frame $\left\{ \mathbf{t}(s),\mathbf{n}(s),%
\mathbf{b}_{1}(s),\mathbf{b}_{2}(s)\right\} ,$ which satisfies the following
Frenet-Serret type formulae of $E_{1}^{4}$ along $\alpha .$
\begin{equation}
\left[
\begin{array}{c}
\mathbf{t} \\
\mathbf{n} \\
\mathbf{b}_{1} \\
\mathbf{b}_{2}%
\end{array}%
\right] ^{\prime }=\left[
\begin{array}{cccc}
0 & \kappa _{1} & 0 & 0 \\
\mu _{1}\kappa _{1} & 0 & \mu _{2}\kappa _{2} & 0 \\
0 & \mu _{3}\kappa _{2} & 0 & \mu _{4}\kappa _{3} \\
0 & 0 & \mu _{5}\kappa _{3} & 0%
\end{array}%
\right] \left[
\begin{array}{c}
\mathbf{t} \\
\mathbf{n} \\
\mathbf{b}_{1} \\
\mathbf{b}_{2}%
\end{array}%
\right] ,
\end{equation}%
where $\kappa _{1},\kappa _{2}$ and $\kappa _{3}$ are respectively, first,
second and third curvature of the spacelike curve $\alpha $, and we have
\cite{8}
\begin{eqnarray}
\kappa _{1}(s) &=&\left\Vert \alpha ^{^{\prime \prime }}(s)\right\Vert ,
\notag \\
\mathbf{n}(s) &=&\frac{\alpha ^{^{\prime \prime }}(s)}{\kappa _{1}(s)},
\notag \\
\mathbf{b}_{1}(s) &=&\frac{\mathbf{n}^{\prime }(s)+\mu _{1}\kappa _{1}(s)%
\mathbf{t}(s)}{\left\Vert \mathbf{n}^{\prime }(s)+\mu _{1}\kappa _{1}(s)%
\mathbf{t}(s)\right\Vert },  \notag \\
\mathbf{b}_{2}(s) &=&\mathbf{t}(s)\times \mathbf{n}(s)\times \mathbf{b}%
_{1}(s).
\end{eqnarray}%
Denote by $\left\{ \mathbf{t}(s),\mathbf{n}(s),\mathbf{b}_{1}(s),\mathbf{b}%
_{2}(s)\right\} ~$the moving Frenet frame along the spacelike curve $\alpha $%
, where $s$ is a pseudo arclength parameter. Then $\mathbf{t}(s)$ is a
spacelike tangent vector and due to the causal character of the principal
normal vector $\mathbf{n}(s)$ and the binormal vector $\mathbf{b}_{1}(s)$,
we have the following Frenet formulas \cite{5}:

\begin{case}
If $\mathbf{n}$ is spacelike vector, then $\mathbf{b}_{1}$can have two
causal characters.\newline
\textit{Case 2.1.1:} if $\mathbf{b}_{1}$ is spacelike vector, then $\mu
_{i}\left( 1\leq i\leq 5\right) $ read%
\begin{equation*}
\mu _{1}=\mu _{3}=-1,\mu _{2}=\mu _{4}=\mu _{5}=1.
\end{equation*}%
And $\mathbf{t},\mathbf{n},\mathbf{b}_{1}$ and $\mathbf{b}_{2}$ are mutually
orthogonal vectors satisfying equations:%
\begin{equation*}
\langle \mathbf{t},\mathbf{t}\rangle =\langle \mathbf{n},\mathbf{n}\rangle
=\langle \mathbf{b}_{1},\mathbf{b}_{1}\rangle =1,\ \langle \mathbf{b}_{2},%
\mathbf{b}_{2}\rangle =-1.
\end{equation*}
\textit{Case 2.1.2}: if $\mathbf{b}_{1}$ is timelike vector, then $\mu
_{i}\left( 1\leq i\leq 5\right) $ read%
\begin{equation*}
\mu _{1}=-1,\mu _{2}=\mu _{3}=\mu _{4}=\mu _{5}=1.
\end{equation*}%
The vectors $\mathbf{t},\mathbf{n},\mathbf{b}_{1}$ and $\mathbf{b}_{2}$
satisfy the conditions:%
\begin{equation*}
\langle \mathbf{t},\mathbf{t}\rangle =\langle \mathbf{n},\mathbf{n}\rangle
=\langle \mathbf{b}_{2},\mathbf{b}_{2}\rangle =1,\langle \mathbf{b}_{1},%
\mathbf{b}_{1}\rangle =-1.
\end{equation*}
\end{case}

\begin{case}
If $\mathbf{n}$ is timelike vector. Then $\mu _{i}\left( 1\leq i\leq
5\right) $ read%
\begin{equation*}
\mu _{1}=\mu _{2}=\mu _{3}=\mu _{4}=1,\mu _{5}=-1.
\end{equation*}%
For the vectors $\mathbf{t},\mathbf{n},\mathbf{b}_{1}$ and $\mathbf{b}_{2}$
, we have%
\begin{equation*}
\langle \mathbf{t},\mathbf{t}\rangle =\langle \mathbf{b}_{1},\mathbf{b}%
_{1}\rangle =\langle \mathbf{b}_{2},\mathbf{b}_{2}\rangle =1,\langle \mathbf{%
n},\mathbf{n}\rangle =-1.
\end{equation*}
\end{case}

Now, let $\gamma $ be a timelike curve. Then $\mathbf{T}$ is timelike vector
and the Frenet equations have the form
\begin{equation}
\left[
\begin{array}{c}
\mathbf{T} \\
\mathbf{N} \\
\mathbf{B}_{1} \\
\mathbf{B}_{2}%
\end{array}%
\right] ^{\prime }=\left[
\begin{array}{cccc}
0 & \bar{\kappa}_{1} & 0 & 0 \\
\bar{\kappa}_{1} & 0 & \bar{\kappa}_{2} & 0 \\
0 & -\bar{\kappa}_{2} & 0 & \bar{\kappa}_{3} \\
0 & 0 & -\bar{\kappa}_{3} & 0%
\end{array}%
\right] \left[
\begin{array}{c}
\mathbf{T} \\
\mathbf{N} \\
\mathbf{B}_{1} \\
\mathbf{B}_{2}%
\end{array}%
\right] ,
\end{equation}%
where $\mathbf{T},\mathbf{N},\mathbf{B}_{1}$ and $\mathbf{B}_{2}$ satisfy
the following equations%
\begin{equation*}
\langle \mathbf{N},\mathbf{N}\rangle =\langle \mathbf{B}_{1},\mathbf{B}%
_{1}\rangle =\langle \mathbf{B}_{2},\mathbf{B}_{2}\rangle =1,\langle \mathbf{%
T},\mathbf{T}\rangle =-1.
\end{equation*}
The functions $\bar{\kappa}_{1},\bar{\kappa}_{2}$ and $\bar{\kappa}_{3}$ are
the curvatures of $\gamma $\cite{10}.

\section{Equiform differential geometry of space and timelike curves}

\textbf{Spacelike curves:}\newline
Let $\alpha :I\rightarrow E_{1}^{4}$ be \textbf{a spacelike curve}. We
define the equiform parameter of $\alpha \left( s\right) $ by%
\begin{equation}
\sigma =\int \frac{ds}{\rho }=\int \kappa _{1}ds
\end{equation}%
where $\rho =\frac{1}{\kappa _{1}},$ is the radius of curvature of the curve
$\alpha $. It follows%
\begin{equation}
\frac{ds}{d\sigma }=\rho .
\end{equation}%
Let $h$ is a homothety with the center in the origin and the coefficient $%
\lambda .$ If we put $\alpha ^{\ast }=h\left( \alpha \right) ,$ then it
follows
\begin{equation}
s^{\ast }=\lambda s,\ \text{and }\rho ^{\ast }=\lambda \rho ,
\end{equation}%
where $s^{\ast }$ is the arclength parameter of $\alpha ^{\ast }$ and $\rho
^{\ast }$ the radius of curvature of $\alpha ^{\ast }$. Hence $\sigma $ is
an equiform invariant parameter of $\alpha .$

\begin{notation}
Let us note that $\kappa _{1},\kappa _{2}$ and $\kappa _{3}$ are not
invariants of the homothety group, it follows $\kappa _{1}^{\ast }=\frac{1}{%
\lambda }\kappa _{1}$, \ $\kappa _{2}^{\ast }=\frac{1}{\lambda }\kappa _{2}$
and $\kappa _{3}^{\ast }=\frac{1}{\lambda }\kappa _{3}$.
\end{notation}

The vector%
\begin{equation}
\mathbf{V}_{1}=\frac{d\alpha \left( s\right) }{d\sigma },
\end{equation}%
is called a tangent vector of the curve $\alpha $ in the equiform geometry.
From $(3.2)$ and $(3.4)$, we get%
\begin{equation}
\mathbf{V}_{1}=\frac{d\alpha \left( s\right) }{d\sigma }=\rho \frac{d\alpha
\left( s\right) }{ds}=\rho \mathbf{t}.
\end{equation}%
Furthermore, we define the tri-normals by%
\begin{equation}
\mathbf{V}_{2}=\rho \mathbf{n},\ \ \ \mathbf{V}_{3}=\rho \mathbf{b}_{1},\ \
\mathbf{V}_{4}=\rho \mathbf{b}_{2}.
\end{equation}%
It is easy to check that the tetrahedron $\left\{ \mathbf{V}_{1},\mathbf{V}%
_{2},\ \mathbf{V}_{3},\ \mathbf{V}_{4}\right\} $ is an equiform invariant
tetrahedron of the curve $\alpha $ \cite{4}$.$ Now, we will find the
derivatives of these vectors with respect to $\sigma $ using by $\left(
3.2\right) ,\ \left( 3.4\right) $ and $\left( 3.6\right) \ $as follows:
\begin{equation*}
\mathbf{V}_{1}^{^{\prime }}=\frac{d}{d\sigma }\left( \mathbf{V}_{1}\right)
=\rho \frac{d}{ds}\left( \rho \mathbf{t}\right) =\dot{\rho}\mathbf{V}_{1}+%
\mathbf{V}_{2},
\end{equation*}%
where the derivative with respect to the arclength $s$ is denoted by a dot
and respect to $\sigma $ by a dash. Similarly, we obtain%
\begin{eqnarray}
\mathbf{V}_{2}^{^{\prime }} &=&\frac{d}{d\sigma }\left( \mathbf{V}%
_{2}\right) =\rho \frac{d}{ds}\left( \rho \mathbf{n}\right) =\mu _{1}\mathbf{%
V}_{1}+\dot{\rho}\mathbf{V}_{2}+\mu _{2}\left( \frac{\kappa _{2}}{\kappa _{1}%
}\right) \mathbf{V}_{3},  \notag \\
\mathbf{V}_{3}^{^{\prime }} &=&\frac{d}{d\sigma }\left( \mathbf{V}%
_{3}\right) =\rho \frac{d}{ds}\left( \rho \mathbf{b}_{1}\right) =\mu
_{3}\left( \frac{\kappa _{2}}{\kappa _{1}}\right) \mathbf{V}_{2}+\dot{\rho}%
\mathbf{V}_{3}+\mu _{4}\left( \frac{\kappa _{3}}{\kappa _{1}}\right) \mathbf{%
V}_{4},  \notag \\
\mathbf{V}_{4}^{^{\prime }} &=&\frac{d}{d\sigma }\left( \mathbf{V}%
_{4}\right) =\rho \frac{d}{ds}\left( \rho \mathbf{b}_{2}\right) =\mu
_{5}\left( \frac{\kappa _{3}}{\kappa _{1}}\right) \mathbf{V}_{3}+\dot{\rho}%
\mathbf{V}_{4},
\end{eqnarray}

\begin{definition}
The functions$\mathcal{\ K}_{i}:I\rightarrow \mathcal{R}\ \left(
i=1,2,3\right) $ defined by
\begin{equation}
\mathcal{K}_{1}=\dot{\rho},\ \mathcal{K}_{2}=\frac{\kappa _{2}}{\kappa _{1}}%
,\ \mathcal{K}_{3}=\frac{\kappa _{3}}{\kappa _{1}}
\end{equation}%
are called $i^{th}$equiform curvatures of the curve $\alpha .\ $These
functions $\mathcal{K}_{i}$ are differential invariant of the group of
equiform transformations, too.
\end{definition}

Thus, the formulas analogous to famous the Frenet formulas in the equiform
geometry of the Minkowski space $E_{1}^{4}$ have the following form:%
\begin{eqnarray}
\mathbf{V}_{1}^{^{\prime }} &=&\mathcal{K}_{1}\mathbf{V}_{1}+\mathbf{V}_{2},
\notag \\
\mathbf{V}_{2}^{^{\prime }} &=&\mu _{1}\mathbf{V}_{1}+\mathcal{K}_{1}\mathbf{%
V}_{2}+\mu _{2}\mathcal{K}_{2}\mathbf{V}_{3},  \notag \\
\mathbf{V}_{3}^{^{\prime }} &=&\mu _{3}\mathcal{K}_{2}\mathbf{V}_{2}+%
\mathcal{K}_{1}\mathbf{V}_{3}+\mu _{4}\mathcal{K}_{3}\mathbf{V}_{4},  \notag
\\
\mathbf{V}_{4}^{^{\prime }} &=&\mu _{5}\mathcal{K}_{3}\mathbf{V}_{3}+%
\mathcal{K}_{1}\mathbf{V}_{4}.
\end{eqnarray}

\begin{notation}
The equiform parameter $\sigma =\int \kappa _{1}(s)ds$ for closed curves is
called the total curvature, and it plays an important role in global
differential geometry of the Euclidean space. Also, the functions $\frac{%
\kappa _{2}}{\kappa _{1}}$ and $\frac{\kappa _{3}}{\kappa _{1}}$ have been
already known as conical curvatures and they also have interesting geometric
interpretation.
\end{notation}

Because of the equiform Frenet formulas $(3.9)$, the following equalities
regarding equiform curvatures can be given
\begin{eqnarray}
\mathcal{K}_{1} &=&\frac{1}{\rho ^{2}}\langle \mathbf{V}_{j}^{^{\prime }},%
\mathbf{V}_{j}\rangle ;\ \ \left( j=1,2,3,4\right) ,  \notag \\
\mathcal{K}_{2} &=&\frac{1}{\mu _{2}\rho ^{2}}\langle \mathbf{V}%
_{2}^{^{\prime }},\mathbf{V}_{3}\rangle =\frac{1}{\mu _{3}\rho ^{2}}\langle
\mathbf{V}_{3}^{^{\prime }},\mathbf{V}_{2}\rangle ,  \notag \\
\mathcal{K}_{3} &=&\frac{1}{\mu _{4}\rho ^{2}}\langle \mathbf{V}%
_{3}^{^{\prime }},\mathbf{V}_{4}\rangle =\frac{1}{\mu _{5}\rho ^{2}}\langle
\mathbf{V}_{4}^{^{\prime }},\mathbf{V}_{3}\rangle .
\end{eqnarray}%
\textbf{Timelike curves:}\newline
Now, if we consider $\gamma :I\rightarrow E_{1}^{4}$ be \textbf{a timelike
curve} parametrized by arclength $s.$ Then as above, we can write%
\begin{eqnarray}
\mathbf{U}_{1} &=&\rho \mathbf{T},\   \notag \\
\mathbf{U}_{2} &=&\rho \mathbf{N},\   \notag \\
\ \ \mathbf{U}_{3} &=&\rho \mathbf{B}_{1},\   \notag \\
\ \mathbf{U}_{4} &=&\rho \mathbf{B}_{2}.
\end{eqnarray}%
Thus, $\left\{ \mathbf{U}_{1},\mathbf{U}_{2},\ \mathbf{U}_{3},\ \mathbf{U}%
_{4}\right\} $ is an equiform invariant tetrahedron of the curve $\gamma $
\cite{7}$.$

The derivatives of these vectors with respect to $\sigma $ are as follows:
\begin{eqnarray}
\mathbf{U}_{1}^{\prime } &=&\frac{d}{d\sigma }\left( \mathbf{U}_{1}\right)
=\rho \frac{d}{ds}\left( \rho \mathbf{T}\right) =\dot{\rho}\mathbf{U}_{1}+%
\mathbf{U}_{2},  \notag \\
\mathbf{U}_{2}^{^{\prime }} &=&\frac{d}{d\sigma }\left( \mathbf{U}%
_{2}\right) =\rho \frac{d}{ds}\left( \rho \mathbf{N}\right) =\mathbf{U}_{1}+%
\dot{\rho}\mathbf{U}_{2}+\left( \frac{\bar{\kappa}_{2}}{\bar{\kappa}_{1}}%
\right) \mathbf{U}_{3},  \notag \\
\mathbf{U}_{3}^{^{\prime }} &=&\frac{d}{d\sigma }\left( \mathbf{U}%
_{3}\right) =\rho \frac{d}{ds}\left( \rho \mathbf{B}_{1}\right) =-\left(
\frac{\bar{\kappa}_{2}}{\bar{\kappa}_{1}}\right) \mathbf{U}_{2}+\dot{\rho}%
\mathbf{U}_{3}+\left( \frac{\bar{\kappa}_{3}}{\bar{\kappa}_{1}}\right)
\mathbf{U}_{4},  \notag \\
\mathbf{U}_{4}^{^{\prime }} &=&\frac{d}{d\sigma }\left( \mathbf{U}%
_{4}\right) =\rho \frac{d}{ds}\left( \rho \mathbf{B}_{2}\right) =-\left(
\frac{\bar{\kappa}_{3}}{\bar{\kappa}_{1}}\right) \mathbf{U}_{3}+\dot{\rho}%
\mathbf{U}_{4}.
\end{eqnarray}%
Hence, the Frenet formulas in the equiform geometry of the Minkowski $4$%
-space can be wriatten as%
\begin{eqnarray}
\mathbf{U}_{1}^{^{\prime }} &=&\mathcal{\bar{K}}_{1}\mathbf{U}_{1}+\mathbf{U}%
_{2},  \notag \\
\mathbf{U}_{2}^{^{\prime }} &=&\mathbf{U}_{1}+\mathcal{\bar{K}}_{1}\mathbf{U}%
_{2}+\mathcal{\bar{K}}_{2}\mathbf{U}_{3},  \notag \\
\mathbf{U}_{3}^{^{\prime }} &=&-\mathcal{\bar{K}}_{2}\mathbf{U}_{2}+\mathcal{%
\bar{K}}_{1}\mathbf{U}_{3}+\mathcal{\bar{K}}_{3}\mathbf{U}_{4},  \notag \\
\mathbf{U}_{4}^{^{\prime }} &=&-\mathcal{\bar{K}}_{3}\mathbf{U}_{3}+\mathcal{%
\bar{K}}_{1}\mathbf{U}_{4}.
\end{eqnarray}%
The functions $\mathcal{\bar{K}}_{1},\ \mathcal{\bar{K}}_{2},\ \mathcal{\bar{%
K}}_{3}$ are the equiform curvatures. In a matrix form, we have%
\begin{equation}
\left[
\begin{array}{c}
\mathbf{U}_{1}^{^{\prime }} \\
\mathbf{U}_{2}^{^{\prime }} \\
\mathbf{U}_{3}^{^{\prime }} \\
\mathbf{U}_{4}^{^{\prime }}%
\end{array}%
\right] =\left[
\begin{array}{cccc}
\mathcal{\bar{K}}_{1} & 1 & 0 & 0 \\
1 & \mathcal{\bar{K}}_{1} & \mathcal{\bar{K}}_{2} & 0 \\
0 & -\mathcal{\bar{K}}_{2} & \mathcal{\bar{K}}_{1} & \mathcal{\bar{K}}_{3}
\\
0 & 0 & -\mathcal{\bar{K}}_{3} & \mathcal{\bar{K}}_{1}%
\end{array}%
\right] \left[
\begin{array}{c}
\mathbf{U}_{1} \\
\mathbf{U}_{2} \\
\mathbf{U}_{3} \\
\mathbf{U}_{4}%
\end{array}%
\right] ,
\end{equation}%
where%
\begin{eqnarray}
\mathcal{\bar{K}}_{1} &=&\frac{1}{\rho ^{2}}\langle \mathbf{U}_{j}^{^{\prime
}},\mathbf{U}_{j}\rangle ;\ \ \left( j=1,2,3,4\right) ,  \notag \\
\mathcal{\bar{K}}_{2} &=&\frac{1}{\rho ^{2}}\langle \mathbf{U}_{2}^{^{\prime
}},\mathbf{U}_{3}\rangle =-\frac{1}{\rho ^{2}}\langle \mathbf{U}%
_{3}^{^{\prime }},\mathbf{U}_{2}\rangle ,  \notag \\
\mathcal{\bar{K}}_{3} &=&\frac{1}{\rho ^{2}}\langle \mathbf{U}_{3}^{^{\prime
}},\mathbf{U}_{4}\rangle =-\frac{1}{\rho ^{2}}\langle \mathbf{U}%
_{4}^{^{\prime }},\mathbf{U}_{3}\rangle .\ \ \ \ \ \ \ \
\end{eqnarray}

\section{The characterizations of space and timelike curves in $E_{1}^{4}$}

In this section, we characterize the space and timelike curves using their
equiform curvatures $\mathcal{K}_{i}\ ,\mathcal{\bar{K}}_{i}\ \left(
i=1,2,3\right) $ in $E_{1}^{4}$ which have important geometric
interpretation, as follows:

\begin{enumerate}
\item If $\mathcal{K}_{2}=const.,\ \mathcal{K}_{3}=const.,\ $of a spacelike
curve, then the curve is a general helix and vice versa. Here, we do not set
conditions on $\mathcal{K}_{1}.$

\item If the above condition holds and $\mathcal{K}_{1}$ is identically
zero, then the spacelike curve is a W-curve (Since all three curvatures $%
\kappa _{1},\kappa _{2},\kappa _{3}$ are constant. For more details, see
\cite{8}).
\end{enumerate}

\begin{theorem}
Let $\alpha $ be a spacelike curve in $E_{1}^{4}$ with the equiform
invariant tetrahedron $\left\{ \mathbf{V}_{1},\mathbf{V}_{2},\ \mathbf{V}%
_{3},\ \mathbf{V}_{4}\right\} \ $and equiform curvature $\mathcal{K}_{1}\neq
0.$ Then $\alpha $ has $\mathcal{K}_{2}\equiv 0$ if and only if $\alpha $
lies fully in a $2$-dimensional subspace of $E_{1}^{4}.$
\end{theorem}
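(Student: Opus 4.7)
The plan is to exploit the equiform Frenet formulas (3.9) directly: the condition $\mathcal{K}_2\equiv 0$ decouples the first two rows from the last two, so the subspace $\mathrm{span}(\mathbf{V}_1,\mathbf{V}_2)$ becomes invariant under the derivation $d/d\sigma$, and I will check that this forces the direction of $\mathbf{V}_1\wedge\mathbf{V}_2$ to be constant. Conversely, if $\alpha$ already lies in a $2$-plane, I will differentiate the position vector twice along $\sigma$ and read off the Frenet formulas to conclude that the component along $\mathbf{V}_3$ must vanish.

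For the direct implication, I would assume $\mathcal{K}_2\equiv 0$ and consider the bivector $W(\sigma)=\mathbf{V}_1(\sigma)\wedge\mathbf{V}_2(\sigma)$. Using
$\mathbf{V}_1'=\mathcal{K}_1\mathbf{V}_1+\mathbf{V}_2$ and $\mathbf{V}_2'=\mu_1\mathbf{V}_1+\mathcal{K}_1\mathbf{V}_2$, a short computation yields $W'=2\mathcal{K}_1 W$, a scalar linear ODE for a bivector. Hence $W(\sigma)=\exp\!\bigl(2\int\mathcal{K}_1\,d\sigma\bigr)\,W(\sigma_0)$, which shows that the $2$-plane $\Pi_0:=\mathrm{span}(\mathbf{V}_1,\mathbf{V}_2)$ is a fixed vector subspace of $E_1^{4}$, independent of $\sigma$. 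Since $d\alpha/d\sigma=\mathbf{V}_1\in\Pi_0$, integration gives $\alpha(\sigma)-\alpha(\sigma_0)\in\Pi_0$, so $\alpha$ is contained in the $2$-dimensional (affine) subspace $\alpha(\sigma_0)+\Pi_0$.

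For the converse, suppose $\alpha$ lies in a $2$-dimensional subspace with direction $\Pi_0\subset E_1^{4}$. Then $\mathbf{V}_1=d\alpha/d\sigma\in\Pi_0$. Differentiating and using $\mathbf{V}_1'=\mathcal{K}_1\mathbf{V}_1+\mathbf{V}_2$, the fact that $\mathbf{V}_1'\in\Pi_0$ and $\mathbf{V}_1\in\Pi_0$ yields $\mathbf{V}_2\in\Pi_0$. Differentiating once more, $\mathbf{V}_2'\in\Pi_0$, and from (3.9) we have $\mathbf{V}_2'=\mu_1\mathbf{V}_1+\mathcal{K}_1\mathbf{V}_2+\mu_2\mathcal{K}_2\mathbf{V}_3$; the first two summands already lie in $\Pi_0$, so $\mu_2\mathcal{K}_2\mathbf{V}_3\in\Pi_0$. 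But $\{\mathbf{V}_1,\mathbf{V}_2,\mathbf{V}_3,\mathbf{V}_4\}$ is a pseudo-orthogonal tetrahedron (inherited from the Frenet frame $\{\mathbf{t},\mathbf{n},\mathbf{b}_1,\mathbf{b}_2\}$), so $\mathbf{V}_3\notin\Pi_0=\mathrm{span}(\mathbf{V}_1,\mathbf{V}_2)$. Since $\mu_2=\pm1\neq 0$, this forces $\mathcal{K}_2\equiv 0$.

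The only real subtlety is justifying in the forward direction that the moving $2$-plane $\mathrm{span}(\mathbf{V}_1,\mathbf{V}_2)$ is actually frozen in $E_1^{4}$; the bivector ODE above is the cleanest way, but one could alternatively argue via the causal-character tables of Cases 2.1.1, 2.1.2 and 2.2 that $\langle \mathbf{V}_3,\mathbf{V}_1\rangle'=\langle \mathbf{V}_3,\mathbf{V}_2\rangle'=0$ when $\mathcal{K}_2=0$, so $\mathbf{V}_3$ (and likewise $\mathbf{V}_4$) keep their pseudo-orthogonality to a constant $2$-plane. Apart from this point, the argument is a routine application of the equiform Frenet formulas (3.9) and requires no use of the hypothesis $\mathcal{K}_1\neq 0$, which is only inherited from the ambient setup that makes $\sigma$ a legitimate parameter.
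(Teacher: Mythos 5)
Your proposal is correct, and it is worth noting that the paper actually states this theorem \emph{without} proof; the closest thing to an ``official'' argument is the proof of the analogous Theorem 4.2 on $\mathcal{K}_3\equiv 0$, which proceeds by computing $\alpha',\alpha'',\alpha''',\alpha^{(4)}$ and invoking a Maclaurin expansion for one direction, and by differentiating the hyperplane equation $\langle\alpha(\sigma)-p,q\rangle=0$ for the other. Your route is genuinely different and, in the forward direction, cleaner: the bivector identity $W'=2\mathcal{K}_1W$ for $W=\mathbf{V}_1\wedge\mathbf{V}_2$ shows directly that $\mathrm{span}(\mathbf{V}_1,\mathbf{V}_2)$ is a frozen subspace, whereas the paper's Taylor-series device tacitly assumes analyticity and that all higher derivatives remain in the span of the first few --- your ODE argument avoids both issues. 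Your converse is the same idea as the paper's (differentiate and project onto the frame), just phrased via membership in the direction plane $\Pi_0$ rather than via a normal vector $q$; the one point to make explicit is that $\mathbf{V}_3=\rho\mathbf{b}_1\notin\mathrm{span}(\mathbf{V}_1,\mathbf{V}_2)$ follows already from the linear independence of the Frenet frame (no appeal to orthogonality in the indefinite metric is needed, which sidesteps any worry about degenerate planes). Your closing observation that $\mathcal{K}_1\neq0$ is never used is accurate and applies equally to the paper's own method; a more delicate point, which both you and the paper gloss over, is that $\mathcal{K}_2\equiv0$ means $\kappa_2\equiv0$, so the vectors $\mathbf{b}_1,\mathbf{b}_2$ (hence $\mathbf{V}_3,\mathbf{V}_4$) are only defined by convention in that case.
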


\begin{theorem}
Let $\alpha $ be a spacelike curve in $E_{1}^{4}$ and $\left\{ \mathbf{V}%
_{1},\mathbf{V}_{2},\ \mathbf{V}_{3},\ \mathbf{V}_{4}\right\} $is the
equiform invariant tetrahedron of it. When $\mathcal{K}_{1},\mathcal{K}%
_{2}\neq 0,$ then $\alpha $ has $\mathcal{K}_{3}\equiv 0$ if and only if $%
\alpha $ lies fully in a hyperplane of $E_{1}^{4}.$
\end{theorem}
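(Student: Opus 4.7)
The plan is to use the equiform Frenet formulas (3.9) together with the underlying Frenet formulas (2.3) to translate the condition $\mathcal{K}_{3}\equiv 0$ into the geometric statement that $\mathbf{b}_{2}$ is a fixed vector of $E_{1}^{4}$. For the forward direction, assuming $\mathcal{K}_{3}\equiv 0$, the last line of (3.9) collapses to $\mathbf{V}_{4}^{\prime}=\mathcal{K}_{1}\mathbf{V}_{4}$. Substituting $\mathbf{V}_{4}=\rho\,\mathbf{b}_{2}$, expanding $\frac{d}{d\sigma}(\rho\mathbf{b}_{2})=\rho\dot{\rho}\,\mathbf{b}_{2}+\rho^{2}\dot{\mathbf{b}}_{2}$ (where a dot denotes $d/ds$), and using $\mathcal{K}_{1}=\dot{\rho}$, I obtain $\rho^{2}\dot{\mathbf{b}}_{2}=0$, so $\mathbf{b}_{2}$ is constant. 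Because $\mathbf{t}$ is pseudo-perpendicular to $\mathbf{b}_{2}$ in each of the causal cases of Section 2, the scalar function $s\mapsto\langle\alpha(s)-\alpha(s_{0}),\mathbf{b}_{2}\rangle$ has zero derivative and is therefore identically zero; thus $\alpha$ lies in the affine hyperplane through $\alpha(s_{0})$ pseudo-orthogonal to $\mathbf{b}_{2}$.

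For the converse, suppose $\alpha$ lies in an affine hyperplane, so there exist a nonzero constant vector $c\in E_{1}^{4}$ and a scalar $d$ with $\langle\alpha(s),c\rangle=d$ for every $s$. Differentiating four times with respect to $s$ and applying (2.3) I obtain in order $\langle\mathbf{t},c\rangle=0$, then (since $\kappa_{1}\neq 0$) $\langle\mathbf{n},c\rangle=0$, then (since $\mu_{2}\kappa_{2}\neq 0$, the latter encoded by $\mathcal{K}_{2}\neq 0$) $\langle\mathbf{b}_{1},c\rangle=0$, and finally $\mu_{4}\kappa_{3}\langle\mathbf{b}_{2},c\rangle=0$. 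As $\{\mathbf{t},\mathbf{n},\mathbf{b}_{1},\mathbf{b}_{2}\}$ is a basis of $E_{1}^{4}$, the first three vanishings force $c$ to be a nonzero multiple of $\mathbf{b}_{2}$, so $\langle\mathbf{b}_{2},c\rangle\neq 0$; the fourth identity then yields $\kappa_{3}\equiv 0$, and hence $\mathcal{K}_{3}=\kappa_{3}/\kappa_{1}\equiv 0$.

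The main obstacle is less conceptual than bookkeeping, namely the causal case split for $\mathbf{n}$ and $\mathbf{b}_{1}$ in Cases 2.1.1, 2.1.2 and 2.2: the signs $\mu_{i}$ switch from one case to another. In each of the three cases $\mu_{2}$ and $\mu_{4}$ are nonzero and the tetrad $\{\mathbf{t},\mathbf{n},\mathbf{b}_{1},\mathbf{b}_{2}\}$ remains a basis of $E_{1}^{4}$, so the argument runs uniformly, and one only needs to verify that every cancellation above uses a product $\mu_{i}\kappa_{j}$ that is nonzero in each case, which it does. A small separate check is that $\mathbf{t}\perp\mathbf{b}_{2}$ and $\mathbf{n}\perp\mathbf{b}_{2}$ in the pseudo inner product, which follows in every case from the signature conditions spelled out in Section 2 together with the orthogonality built into the Gram--Schmidt construction (2.4) of $\mathbf{b}_{2}=\mathbf{t}\times\mathbf{n}\times\mathbf{b}_{1}$.
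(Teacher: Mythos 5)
Your proposal is correct, and for one of the two implications it takes a genuinely different (and arguably cleaner) route than the paper. For the converse direction (hyperplane $\Rightarrow \mathcal{K}_{3}\equiv 0$) your argument is essentially the paper's: the authors also differentiate the hyperplane equation $\langle \alpha(\sigma)-p,q\rangle=0$ repeatedly, obtain $\langle \mathbf{V}_{1},q\rangle=\langle \mathbf{V}_{2},q\rangle=\langle \mathbf{V}_{3},q\rangle=0$ and then $\mathcal{K}_{3}\langle \mathbf{V}_{4},q\rangle=0$, concluding as you do from the fact that $q$ must then be a nonzero multiple of the fourth frame vector (they work with the equiform frame $\mathbf{V}_{i}=\rho\,\mathbf{v}_{i}$ and the parameter $\sigma$ rather than with $\{\mathbf{t},\mathbf{n},\mathbf{b}_{1},\mathbf{b}_{2}\}$ and $s$, but since $\rho\neq 0$ this is the same computation). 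For the forward direction the paper instead computes $\alpha',\alpha'',\alpha''',\alpha^{(4)}$ in the equiform frame, observes that when $\mathcal{K}_{3}\equiv 0$ these lie in $\mathrm{span}\{\mathbf{V}_{1},\mathbf{V}_{2},\mathbf{V}_{3}\}$, and invokes the Maclaurin expansion of $\alpha$ to conclude it lies in the hyperplane spanned by $\alpha'(0),\alpha''(0),\alpha'''(0)$; this tacitly assumes analyticity of $\alpha$ and that \emph{all} higher derivatives remain in that span (asserted but not verified). Your argument — that $\mathcal{K}_{3}\equiv 0$ forces $\dot{\mathbf{b}}_{2}=0$, so $\mathbf{b}_{2}$ is a constant vector and $\langle\alpha(s)-\alpha(s_{0}),\mathbf{b}_{2}\rangle$ has vanishing derivative — is more elementary, needs only $C^{1}$ regularity of the frame, and sidesteps the analyticity issue entirely. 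The only hypotheses you use are the ones actually needed ($\rho\neq 0$, i.e.\ $\kappa_{1}\neq 0$, and the pseudo-orthogonality of the frame together with nondegeneracy of the metric on $\mathrm{span}\{\mathbf{t},\mathbf{n},\mathbf{b}_{1}\}$, which holds in every causal case of Section 2), so your case-by-case remarks on the signs $\mu_{i}$ are adequate.
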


\begin{proof}
If $\alpha \ $has $\mathcal{K}_{3}\equiv 0,$ then from $(3.9)$, we have%
\begin{eqnarray*}
\alpha ^{^{\prime }} &=&\mathbf{V}_{1}, \\
\alpha ^{^{\prime \prime }} &=&\mathcal{K}_{1}\mathbf{V}_{1}+\mathbf{V}_{2},
\\
\alpha ^{^{\prime \prime \prime }} &=&(\rho \mathcal{\dot{K}}_{1}+\mathcal{K}%
_{1}^{2}+\mu _{1})\mathbf{V}_{1}+2\mathcal{K}_{1}\mathbf{V}_{2}+\mu _{2}%
\mathcal{K}_{2}\mathbf{V}_{3}, \\
\alpha ^{\left( 4\right) } &=&\frac{d\left( \rho \mathcal{\dot{K}}_{1}+%
\mathcal{K}_{1}^{2}+\mu _{1}\right) }{d\sigma }+\mathcal{K}_{1}\left( \rho
\mathcal{\dot{K}}+\mathcal{K}_{1}^{2}+\mu _{1}\right) +2\mu _{1}\mathcal{K}%
_{1}\mathbf{V}_{1} \\
&&+\left( 3\rho \mathcal{\dot{K}}_{1}+3\mathcal{K}_{1}^{2}+\mu _{2}\mu _{3}%
\mathcal{K}_{2}^{2}+\mu _{1}\right) \mathbf{V}_{2} \\
&&+\left( 3\mu _{2}\mathcal{K}_{1}\mathcal{K}_{2}+\mu _{2}\rho \mathcal{\dot{%
K}}_{2}\right) \mathbf{V}_{3}+\left( \mu _{2}\mu _{4}\mathcal{K}_{2}\mathcal{%
K}_{3}\right) \mathbf{V}_{4}.
\end{eqnarray*}%
Hence, by using Maclaurin expansion for $\alpha $, given by%
\begin{equation*}
\alpha \left( \sigma \right) =\alpha \left( 0\right) +\alpha ^{^{\prime
}}\left( 0\right) \sigma +\alpha ^{^{\prime \prime }}\left( 0\right) \frac{%
\sigma ^{2}}{2!}+\alpha ^{^{\prime \prime \prime }}\left( 0\right) \frac{%
\sigma ^{3}}{3!}+\alpha ^{\left( 4\right) }\left( 0\right) \frac{\sigma ^{4}%
}{4!}+...,
\end{equation*}%
we obtain that $\alpha $ lies fully in a spacelike hyperplane of the space $%
E_{1}^{4}$ by spanned%
\begin{equation*}
\left\{ \alpha ^{^{\prime }}\left( 0\right) ,\alpha ^{^{\prime \prime
}}\left( 0\right) ,\alpha ^{^{\prime \prime \prime }}\left( 0\right)
\right\} .
\end{equation*}%
Conversely, assume that $\alpha $ satisfies the assumptions of the theorem
and lies fully in a spacelike hyperplane $\Pi $ of the space $E_{1}^{4}.$
Then, there exist the points $p,q\in E_{1}^{4},$ such that $\alpha $
satisfies the equation of $\Pi $ given by%
\begin{equation}
\langle \alpha \left( \sigma \right) -p,q\rangle =0,
\end{equation}%
where $q\in \Pi ^{\perp }$ is a timelike vector$.$ Differentiation of the
last equation yields%
\begin{equation*}
\langle \alpha ^{^{\prime }},q\rangle =\langle \alpha ^{^{\prime \prime
}},q\rangle =\langle \alpha ^{^{\prime \prime \prime }},q\rangle =0.
\end{equation*}%
Therefore, $\alpha ^{\prime },\alpha ^{\prime \prime },\alpha ^{\prime
\prime \prime }\in \Pi .$ Now, since%
\begin{equation*}
\alpha ^{^{\prime }}=\mathbf{V}_{1}\text{ \ and \ }\alpha ^{^{\prime \prime
}}=\mathcal{K}_{1}\mathbf{V}_{1}+\mathbf{V}_{2},
\end{equation*}%
it follows that%
\begin{equation}
\langle \mathbf{V}_{1},q\rangle =\langle \mathbf{V}_{2},q\rangle =0.
\end{equation}%
Next, differentiation $(4.2)$ gives%
\begin{equation}
\langle \mathbf{V}_{2}^{\prime },q\rangle =0.
\end{equation}%
From the equiform Frenet equations, we obtain
\begin{equation}
\langle \mathbf{V}_{3},q\rangle =0.
\end{equation}%
Again, differentiating $(4.4)$ leads to
\begin{equation*}
\langle \mu _{3}\mathcal{K}_{2}\mathbf{V}_{2}+\mathcal{K}_{1}\mathbf{V}%
_{3}+\mu _{4}\mathcal{K}_{3}\mathbf{V}_{4},q\rangle =\mathcal{K}_{3}\langle
\mathbf{V}_{4},q\rangle =0.
\end{equation*}%
Because $\mathbf{V}_{4}$ is the only vector perpendicular to $\left\{
\mathbf{V}_{1},\mathbf{V}_{2},\ \mathbf{V}_{3}\right\} ,$ we obtain%
\begin{equation*}
\mathcal{K}_{3}=0.
\end{equation*}%
This completes the proof.
\end{proof}

\begin{theorem}
Let $\alpha $ be a spacelike curve with equiform invariant vector $\mathbf{V}%
_{3}$ in the equiform geometry of $E_{1}^{4}$. Then, the curve $\alpha $ is
a general helix if and only if
\begin{equation}
\mathbf{V}_{3}^{^{\prime \prime }}+\psi _{1}\mathbf{V}_{3}=\psi _{2}\mathbf{V%
}_{1}+\psi _{3}\mathbf{V}_{2}+\psi _{4}\mathbf{V}_{4},
\end{equation}%
where
\begin{eqnarray*}
\psi _{1} &=&-\left( \rho \mathcal{\dot{K}}_{1}+\mathcal{K}_{1}^{2}+\mu
_{2}\mu _{3}\mathcal{K}_{2}^{2}+\mu _{4}\mu _{5}\mathcal{K}_{3}^{2}\right) ,
\\
\psi _{2} &=&\mu _{1}\mu _{3}\mathcal{K}_{2},\  \\
\psi _{3} &=&2\mu _{3}\mathcal{K}_{1}\mathcal{K}_{2}, \\
\psi _{4} &=&2\mu _{4}\mathcal{K}_{1}\mathcal{K}_{3}.
\end{eqnarray*}
\end{theorem}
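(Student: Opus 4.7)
\medskip
\noindent\textbf{Proof proposal.} The plan is to compute $\mathbf{V}_3''$ directly from the equiform Frenet equations $(3.9)$, expand the result in the tetrad $\{\mathbf{V}_1, \mathbf{V}_2, \mathbf{V}_3, \mathbf{V}_4\}$, and then match the outcome against the identity $(4.5)$. The characterization of general helices recorded at the beginning of Section~4 --- namely, that $\alpha$ is a general helix if and only if both $\mathcal{K}_2$ and $\mathcal{K}_3$ are constant --- will then close both directions of the equivalence with almost no extra work.

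Starting from $\mathbf{V}_3' = \mu_3 \mathcal{K}_2 \mathbf{V}_2 + \mathcal{K}_1 \mathbf{V}_3 + \mu_4 \mathcal{K}_3 \mathbf{V}_4$, I would differentiate once more in $\sigma$ and substitute the Frenet expressions for $\mathbf{V}_2'$, $\mathbf{V}_3'$ and $\mathbf{V}_4'$ from $(3.9)$. Using the relation $\mathcal{K}_1' = \rho\,\dot{\mathcal{K}}_1$ that follows from $d/d\sigma = \rho\,d/ds$, and collecting the coefficients of each tetrad vector, the computation should yield
\begin{equation*}
\mathbf{V}_3'' + \psi_1 \mathbf{V}_3 \;=\; \mu_1\mu_3 \mathcal{K}_2\, \mathbf{V}_1 \;+\; \mu_3\bigl(\mathcal{K}_2' + 2\mathcal{K}_1 \mathcal{K}_2\bigr)\mathbf{V}_2 \;+\; \mu_4\bigl(\mathcal{K}_3' + 2\mathcal{K}_1 \mathcal{K}_3\bigr)\mathbf{V}_4,
\end{equation*}
where $\psi_1$ appears automatically in the form stated in the theorem and the coefficient of $\mathbf{V}_1$ is already $\psi_2$.

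Matching this expansion against $(4.5)$ finishes both directions. For the forward implication, if $\alpha$ is a general helix then $\mathcal{K}_2' \equiv 0 \equiv \mathcal{K}_3'$, so the $\mathbf{V}_2$- and $\mathbf{V}_4$-coefficients collapse precisely to $\psi_3 = 2\mu_3 \mathcal{K}_1 \mathcal{K}_2$ and $\psi_4 = 2\mu_4 \mathcal{K}_1 \mathcal{K}_3$. For the converse, the tetrad $\{\mathbf{V}_1, \mathbf{V}_2, \mathbf{V}_3, \mathbf{V}_4\}$ is pseudo-orthogonal (with the signature dictated by the relevant case), so the expansion of $\mathbf{V}_3'' + \psi_1 \mathbf{V}_3$ in this basis is unique; equating the two expressions forces $\mu_3 \mathcal{K}_2' = 0$ and $\mu_4 \mathcal{K}_3' = 0$, and since each $\mu_i = \pm 1$ we conclude $\mathcal{K}_2 = \mathrm{const}$ and $\mathcal{K}_3 = \mathrm{const}$, whence $\alpha$ is a general helix.

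The main obstacle is purely organizational: tracking the sign constants $\mu_1,\dots,\mu_5$ through the two successive differentiations without splitting the argument into the three causal configurations (Cases 2.1.1, 2.1.2 and 2.2). Since the Frenet system $(3.9)$ is written uniformly in $\mu_i$, a single computation --- carried out symbolically in the $\mu_i$ --- covers all three sub-cases at once, and no further geometric ingredient beyond Frenet-type differentiation is needed.
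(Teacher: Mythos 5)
Your proposal is correct and follows essentially the same route as the paper: differentiate the Frenet relation for $\mathbf{V}_3'$ once more using $(3.9)$, collect coefficients in the tetrad (the paper writes your $\mathcal{K}_i'$ as $\rho\,\dot{\mathcal{K}}_i$), and read off $\dot{\mathcal{K}}_2=\dot{\mathcal{K}}_3=0$ by comparison with $(4.5)$. Your explicit appeal to the linear independence of the tetrad in the converse direction is a small clarification of a step the paper leaves implicit, but it is the same argument.
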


\begin{proof}
Suppose that the curve $\alpha $ is a general helix. Thus, we have%
\begin{equation}
\mathcal{K}_{2}=const.\text{ and }\mathcal{K}_{3}=const.
\end{equation}%
From $(3.9)$ and $(4.6)$ , it is easy to prove that the equation $(4.5)$ is
satisfied.\newline
Conversely, we assume that the equation $(4.5)$ holds. Then from $(3.9)$, it
follows that%
\begin{equation}
\mathbf{V}_{3}^{^{\prime }}=\mu _{3}\mathcal{K}_{2}\mathbf{V}_{2}+\mathcal{K}%
_{1}\mathbf{V}_{3}+\mu _{4}\mathcal{K}_{3}\mathbf{V}_{4}.
\end{equation}%
Differentiating $(4.7)$ with respect to $\sigma $, we get%
\begin{eqnarray*}
\mathbf{V}_{3}^{^{\prime \prime }} &=&\left( \mu _{1}\mu _{3}\mathcal{K}%
_{2}\right) \mathbf{V}_{1}+\left( \mu _{3}\rho \mathcal{\dot{K}}_{2}+2\mu
_{3}\mathcal{K}_{1}\mathcal{K}_{2}\right) \mathbf{V}_{2} \\
&&+\left( \rho \mathcal{\dot{K}}_{1}+\mathcal{K}_{1}^{2}+\mu _{2}\mu _{3}%
\mathcal{K}_{2}^{2}+\mu _{4}\mu _{5}\mathcal{K}_{3}^{2}\right) \mathbf{V}_{3}
\\
&&+\left( \mu _{4}\rho \mathcal{\dot{K}}_{3}+2\mu _{4}\mathcal{K}_{1}%
\mathcal{K}_{3}\right) \mathbf{V}_{4},
\end{eqnarray*}%
so, we obtain%
\begin{equation*}
\mathcal{\dot{K}}_{2}=0\text{ and }\mathcal{\dot{K}}_{3}=0,
\end{equation*}%
which completes the proof.
\end{proof}

\begin{theorem}
Let $\gamma $ be \textbf{a timelike} curve in $E_{1}^{4}$ with the equiform
invariant tetrahedron $\left\{ \mathbf{U}_{1},\mathbf{U}_{2},\ \mathbf{U}%
_{3},\ \mathbf{U}_{4}\right\} \ $and with equiform curvatures $\mathcal{\bar{%
K}}_{1}\neq 0.$ Then $\gamma $ has $\mathcal{\bar{K}}_{2}\equiv 0$ if and
only if $\gamma $ lies fully in a 2-dimensional subspace of $E_{1}^{4}.$
\end{theorem}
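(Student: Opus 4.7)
The plan is to mirror the structure of the proof of Theorem 4.2, adapted to the timelike Frenet equations $(3.15)$. Two implications must be established, and both hinge on the observation that $\gamma'=\mathbf{U}_1$ and $\gamma''=\bar{\mathcal{K}}_1\mathbf{U}_1+\mathbf{U}_2$ always lie in $\mathrm{span}\{\mathbf{U}_1,\mathbf{U}_2\}$, while the first derivative that can leave this 2-plane is $\gamma'''$, and the coefficient of the escape vector $\mathbf{U}_3$ there is exactly $\bar{\mathcal{K}}_2$.

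For the forward implication, I would assume $\bar{\mathcal{K}}_2\equiv 0$. Then $(3.15)$ reduces on the first two rows to $\mathbf{U}_1'=\bar{\mathcal{K}}_1\mathbf{U}_1+\mathbf{U}_2$ and $\mathbf{U}_2'=\mathbf{U}_1+\bar{\mathcal{K}}_1\mathbf{U}_2$, so the subspace $\mathrm{span}\{\mathbf{U}_1(\sigma),\mathbf{U}_2(\sigma)\}$ is closed under $d/d\sigma$. A short induction on $k$ then shows $\gamma^{(k)}(\sigma)\in \mathrm{span}\{\mathbf{U}_1(\sigma),\mathbf{U}_2(\sigma)\}$ for every $k\ge 1$. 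Writing the Maclaurin expansion of $\gamma$ about $\sigma=0$, exactly as is done in the proof of Theorem 4.2, I conclude that $\gamma(\sigma)-\gamma(0)$ lies in the 2-dimensional subspace $\mathrm{span}\{\mathbf{U}_1(0),\mathbf{U}_2(0)\}$, i.e.\ $\gamma$ lies fully in a 2-dimensional (affine) subspace of $E_1^4$.

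For the converse, suppose $\gamma$ lies fully in a 2-dimensional subspace $\Pi$ translated by some $p\in E_1^4$. Then $\Pi^{\perp}$ is 2-dimensional, and for every $q\in\Pi^{\perp}$ the identity $\langle\gamma(\sigma)-p,q\rangle=0$ holds. Differentiating twice and substituting $\gamma'=\mathbf{U}_1$ and $\gamma''=\bar{\mathcal{K}}_1\mathbf{U}_1+\mathbf{U}_2$ yields $\langle\mathbf{U}_1,q\rangle=\langle\mathbf{U}_2,q\rangle=0$. Differentiating once more and using $\mathbf{U}_2'=\mathbf{U}_1+\bar{\mathcal{K}}_1\mathbf{U}_2+\bar{\mathcal{K}}_2\mathbf{U}_3$, every surviving term is a multiple of $\mathbf{U}_1$, $\mathbf{U}_2$, or $\bar{\mathcal{K}}_2\mathbf{U}_3$, so I obtain $\bar{\mathcal{K}}_2\langle\mathbf{U}_3,q\rangle=0$. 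Since $\mathrm{span}\{\mathbf{U}_1,\mathbf{U}_2\}\subset\Pi$ and both $\Pi^{\perp}$ and $\mathrm{span}\{\mathbf{U}_3,\mathbf{U}_4\}$ are 2-dimensional and orthogonal to $\mathbf{U}_1,\mathbf{U}_2$, they coincide; choosing $q=\mathbf{U}_3$ gives $\bar{\mathcal{K}}_2\langle\mathbf{U}_3,\mathbf{U}_3\rangle=0$, and since in the timelike setting $\langle\mathbf{U}_3,\mathbf{U}_3\rangle=\rho^{2}\langle\mathbf{B}_1,\mathbf{B}_1\rangle=\rho^{2}\neq 0$ (from the causal conditions below $(2.5)$, and because $\bar{\mathcal{K}}_1\neq 0$ presupposes $\rho=1/\bar{\kappa}_1$ is well defined and nonzero), it follows that $\bar{\mathcal{K}}_2\equiv 0$.

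The routine part is the Maclaurin argument, which is essentially cut-and-paste from Theorem 4.2. The step requiring most care is the converse, specifically the identification $\Pi^{\perp}=\mathrm{span}\{\mathbf{U}_3,\mathbf{U}_4\}$ and the verification that $\langle\mathbf{U}_3,\mathbf{U}_3\rangle$ is nonzero, which relies on invoking the correct causal characters from the timelike Frenet data in $(2.5)$; without this the vanishing of $\bar{\mathcal{K}}_2$ cannot be extracted from $\bar{\mathcal{K}}_2\langle\mathbf{U}_3,q\rangle=0$.
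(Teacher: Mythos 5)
The paper states this theorem (4.4), like its spacelike counterpart (4.1), with no proof at all, so there is nothing to compare line by line; your argument is correct and is precisely the natural adaptation of the paper's proofs of Theorems 4.2 and 4.5 (Maclaurin expansion in one direction, repeated differentiation of $\langle\gamma(\sigma)-p,q\rangle=0$ in the other), with the frame $\{\mathbf{U}_3,\mathbf{U}_4\}$ replacing the single normal $\mathbf{U}_4$. If anything, your converse is tidier than the paper's analogues: identifying $\Pi^{\perp}=\mathrm{span}\{\mathbf{U}_3,\mathbf{U}_4\}$ via nondegeneracy of the plane $\mathrm{span}\{\mathbf{T},\mathbf{N}\}$ and testing against $q=\mathbf{U}_3$ with $\langle\mathbf{U}_3,\mathbf{U}_3\rangle=\rho^{2}\neq 0$ is more precise than the paper's appeal to ``the only vector perpendicular to'' the remaining frame vectors (the only slips are cosmetic: the timelike equiform Frenet equations are $(3.13)$--$(3.14)$, not $(3.15)$, and $\rho\neq 0$ comes from $\bar{\kappa}_1\neq 0$ rather than from $\bar{\mathcal{K}}_1=\dot{\rho}\neq 0$).
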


\begin{theorem}
Let $\gamma $ be \textbf{a timelike} curve in $E_{1}^{4}$ with the equiform
invariant tetrahedron $\left\{ \mathbf{U}_{1},\mathbf{U}_{2},\ \mathbf{U}%
_{3},\ \mathbf{U}_{4}\right\} \ $and with equiform curvatures $\mathcal{\bar{%
K}}_{1},\mathcal{\bar{K}}_{2}\neq 0.$ Then $\gamma $ has $\mathcal{\bar{K}}%
_{3}\equiv 0$ if and only if $\gamma $ lies fully in a hyperplane of $%
E_{1}^{4}.$
\end{theorem}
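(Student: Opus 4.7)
The plan is to mirror the argument of Theorem~4.2 for the timelike case, using the equiform Frenet formulas (3.14) in place of (3.9) and taking care of the altered causal structure. The strategy is: for the forward direction, differentiate $\gamma$ repeatedly and show every derivative lives in a common three-dimensional subspace; for the converse, differentiate the defining equation of a candidate hyperplane and peel off orthogonality conditions on the tetrahedron one at a time.

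For the forward implication I would assume $\mathcal{\bar{K}}_3\equiv 0$. The system (3.14) then decouples $\mathbf{U}_4$ from the triple $\{\mathbf{U}_1,\mathbf{U}_2,\mathbf{U}_3\}$, since $\mathbf{U}_3'=-\mathcal{\bar{K}}_2\mathbf{U}_2+\mathcal{\bar{K}}_1\mathbf{U}_3$. Successively computing $\gamma',\gamma'',\gamma''',\gamma^{(4)},\dots$ in terms of the tetrahedron shows every derivative is a linear combination of $\mathbf{U}_1,\mathbf{U}_2,\mathbf{U}_3$ alone. A Maclaurin expansion of $\gamma(\sigma)$ about $\sigma=0$ then confines $\gamma$ to the affine hyperplane through $\gamma(0)$ spanned by $\{\gamma'(0),\gamma''(0),\gamma'''(0)\}$; the hypothesis $\mathcal{\bar{K}}_2\neq 0$ guarantees these three vectors are linearly independent, so the span is genuinely three-dimensional.

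For the converse I would suppose $\gamma\subset\Pi$, where $\Pi$ is a hyperplane with equation $\langle \gamma(\sigma)-p,q\rangle=0$ and nonzero normal $q\in\Pi^{\perp}$. Because $\gamma$ is timelike, $\mathbf{U}_1$ is timelike and tangent to $\Pi$, so $\Pi$ is Lorentzian and $q$ is spacelike. Iterated differentiation of the hyperplane equation, combined with $\gamma'=\mathbf{U}_1$, $\gamma''=\mathcal{\bar{K}}_1\mathbf{U}_1+\mathbf{U}_2$ and (3.14), produces successively
\[
\langle \mathbf{U}_1,q\rangle=\langle \mathbf{U}_2,q\rangle=\langle \mathbf{U}_3,q\rangle=0,
\]
where the passage from $\langle\mathbf{U}_2,q\rangle=0$ to $\langle\mathbf{U}_3,q\rangle=0$ is precisely the step that exploits $\mathcal{\bar{K}}_2\neq 0$. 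One further differentiation, using $\mathbf{U}_3'=-\mathcal{\bar{K}}_2\mathbf{U}_2+\mathcal{\bar{K}}_1\mathbf{U}_3+\mathcal{\bar{K}}_3\mathbf{U}_4$, yields $\mathcal{\bar{K}}_3\langle\mathbf{U}_4,q\rangle=0$. Since $q$ is orthogonal to three independent members of the tetrahedron, it must be a nonzero scalar multiple of $\mathbf{U}_4$; hence $\langle\mathbf{U}_4,q\rangle\neq 0$ and therefore $\mathcal{\bar{K}}_3\equiv 0$.

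The only real obstacle is bookkeeping for causal characters: in contrast with Theorem~4.2, here $\Pi$ is Lorentzian and its normal $q$ is spacelike, which I must confirm so that the hyperplane equation is nondegenerate and the final proportionality $q\parallel\mathbf{U}_4$ is legitimate. Beyond this, the proof is a direct transcription of the spacelike argument with the sign changes in (3.14) propagated through the calculations.
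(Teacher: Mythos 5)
Your proposal is correct and follows essentially the same route as the paper's own proof: the forward direction by computing $\gamma',\gamma'',\gamma''',\gamma^{(4)}$ from the equiform Frenet equations and invoking the Maclaurin expansion, and the converse by repeatedly differentiating the hyperplane equation $\langle\gamma(\sigma)-p,q\rangle=0$ to obtain $\langle\mathbf{U}_1,q\rangle=\langle\mathbf{U}_2,q\rangle=\langle\mathbf{U}_3,q\rangle=0$ and then $\mathcal{\bar{K}}_3\langle\mathbf{U}_4,q\rangle=0$ with $\langle\mathbf{U}_4,q\rangle\neq0$. Your added remarks on the linear independence of $\{\gamma'(0),\gamma''(0),\gamma'''(0)\}$ via $\mathcal{\bar{K}}_2\neq0$ and on the causal character of $q$ are welcome refinements that the paper leaves implicit.
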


\begin{proof}
If $\gamma \ $has $\mathcal{\bar{K}}_{3}\equiv 0,$ then by using the
equiform Frenet equations $(3.13)$ we obtain%
\begin{eqnarray*}
\gamma ^{^{\prime }} &=&\mathbf{U}_{1}, \\
\gamma ^{^{\prime \prime }} &=&\mathcal{\bar{K}}_{1}\mathbf{U}_{1}+\mathbf{U}%
_{2}, \\
\gamma ^{^{\prime \prime \prime }} &=&\left( \rho \overset{\mathbf{\cdot }}{%
\mathcal{\bar{K}}_{1}}+\mathcal{\bar{K}}_{1}^{2}+1\right) \mathbf{U}_{1}+2%
\mathcal{\bar{K}}_{1}\mathbf{U}_{2}+\mathcal{\bar{K}}_{2}\mathbf{U}_{3}, \\
\gamma ^{\left( 4\right) } &=&\left( \frac{d\left( \rho \overset{\mathbf{%
\cdot }}{\mathcal{\bar{K}}_{1}}+\mathcal{\bar{K}}_{1}^{2}+1\right) }{d\sigma
}+\mathcal{\bar{K}}_{1}\left( \rho \overset{\mathbf{\cdot }}{\mathcal{\bar{K}%
}_{1}}+\mathcal{\bar{K}}_{1}^{2}+1\right) +2\mathcal{\bar{K}}_{1}\right)
\mathbf{U}_{1} \\
&&+\left( 3\rho \overset{\mathbf{\cdot }}{\mathcal{\bar{K}}_{1}}+3\mathcal{%
\bar{K}}_{1}^{2}-\mathcal{\bar{K}}_{2}^{2}+1\right) \mathbf{U}_{2} \\
&&+\left( 3\mathcal{\bar{K}}_{1}\mathcal{\bar{K}}_{2}+\rho \overset{\mathbf{%
\cdot }}{\mathcal{\bar{K}}_{2}}\right) \mathbf{U}_{3}+\mathcal{\bar{K}}_{2}%
\mathcal{\bar{K}}_{3}\mathbf{U}_{4}.
\end{eqnarray*}%
Next, all higher order derivatives of $\gamma $ are linear combinations of
vectors $\gamma ^{^{\prime }},\gamma ^{^{\prime \prime }},\gamma ^{^{\prime
\prime \prime }}$, so by using Maclaurin expansion for $\gamma $ given by%
\begin{equation*}
\gamma \left( \sigma \right) =\gamma \left( 0\right) +\gamma ^{^{\prime
}}\left( 0\right) \sigma +\gamma ^{^{\prime \prime }}\left( 0\right) \frac{%
\sigma ^{2}}{2!}+\gamma ^{^{\prime \prime \prime }}\left( 0\right) \frac{%
\sigma ^{3}}{3!}+\gamma ^{\left( 4\right) }\left( 0\right) \frac{\sigma ^{4}%
}{4!}+...,
\end{equation*}%
we conclude that $\gamma $ lies fully in a timelike hyperplane of the space $%
E_{1}^{4},$ spanned by%
\begin{equation*}
\left\{ \gamma ^{^{\prime }}\left( 0\right) ,\gamma ^{^{\prime \prime
}}\left( 0\right) ,\gamma ^{^{\prime \prime \prime }}\left( 0\right)
\right\} .
\end{equation*}%
Conversely, we suppose that $\gamma $ lies fully in a timelike hyperplane $%
\bar{\Pi}$ of the space $E_{1}^{4}.$ Then, there exist the points $p,q\in
E_{1}^{4}$ such that $\gamma $ satisfies the equation of $\bar{\Pi}$ given by%
\begin{equation}
\langle \gamma \left( \sigma \right) -p,q\rangle =0,
\end{equation}%
where $q\in \bar{\Pi}^{\perp }.$ By differentiating $\left( 4.8\right) $
with respect to $\sigma $, we can write%
\begin{equation*}
\langle \gamma ^{^{\prime }},q\rangle =\langle \gamma ^{^{\prime \prime
}},q\rangle =\langle \gamma ^{^{\prime \prime \prime }},q\rangle =0.
\end{equation*}%
Since%
\begin{equation*}
\gamma ^{^{\prime }}=\mathbf{U}_{1}\text{ \ and \ }\gamma ^{^{\prime \prime
}}=\mathcal{\bar{K}}_{1}\mathbf{U}_{1}+\mathbf{U}_{2},
\end{equation*}%
it follows that%
\begin{equation}
\langle \mathbf{U}_{1},q\rangle =\langle \mathbf{U}_{2},q\rangle =0.
\end{equation}%
Similarly, we have%
\begin{equation}
\langle \mathbf{U}_{3},q\rangle =0.
\end{equation}%
Again, differentiation $(4.10)$ gives%
\begin{equation*}
\langle -\mathcal{\bar{K}}_{2}\mathbf{U}_{2}+\mathcal{\bar{K}}_{1}\mathbf{U}%
_{3}+\mathcal{\bar{K}}_{3}\mathbf{U}_{4},q\rangle =\mathcal{\bar{K}}%
_{3}\langle \mathbf{U}_{4},q\rangle =0,
\end{equation*}%
because $\mathbf{U}_{4}$ is the only vector perpendicular to $\left\{
\mathbf{U}_{1},\mathbf{U}_{2},\ \mathbf{U}_{3}\right\} ,$ we obtain%
\begin{equation*}
\mathcal{\bar{K}}_{3}=0,
\end{equation*}%
this completes the proof.
\end{proof}

\begin{theorem}
Let $\gamma $ be \textbf{a timelike curve }with equiform invariant vector $%
\mathbf{U}_{3}$ in the equiform geometry of $E_{1}^{4}$. Then $\gamma $ is a
general helix if and only if%
\begin{equation}
\mathbf{U}_{3}^{^{\prime \prime }}+\varphi _{1}\mathbf{U}_{3}=\varphi _{2}%
\mathbf{U}_{1}+\varphi _{3}\mathbf{U}_{2}+\varphi _{4}\mathbf{U}_{4},
\end{equation}%
where
\begin{eqnarray*}
\varphi _{1} &=&-\rho \overset{\cdot }{\mathcal{\bar{K}}_{1}}-\mathcal{\bar{K%
}}_{1}^{2}+\mathcal{\bar{K}}_{2}^{2}+\mathcal{\bar{K}}_{3}^{2}, \\
\varphi _{2} &=&-\mathcal{\bar{K}}_{2},\  \\
\varphi _{3} &=&-2\mathcal{\bar{K}}_{1}\mathcal{\bar{K}}_{2}, \\
\varphi _{4} &=&2\mathcal{\bar{K}}_{1}\mathcal{\bar{K}}_{3}.
\end{eqnarray*}
\end{theorem}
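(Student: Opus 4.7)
The strategy mirrors that of Theorem 4.2, which handled the spacelike case: compute $\mathbf{U}_3''$ in full generality using the equiform Frenet equations (3.13), then read off when the result matches the claimed identity. The forward direction uses the helix hypothesis to kill two terms, and the converse equates coefficients on the basis $\{\mathbf{U}_1,\mathbf{U}_2,\mathbf{U}_3,\mathbf{U}_4\}$ to recover the helix condition.

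For the forward direction, assume $\gamma$ is a general helix, so $\mathcal{\bar{K}}_2$ and $\mathcal{\bar{K}}_3$ are constant. I would start from the third row of (3.13),
\begin{equation*}
\mathbf{U}_3' = -\mathcal{\bar{K}}_2 \mathbf{U}_2 + \mathcal{\bar{K}}_1 \mathbf{U}_3 + \mathcal{\bar{K}}_3 \mathbf{U}_4,
\end{equation*}
and differentiate once more with respect to $\sigma$. Using $\tfrac{d}{d\sigma}\mathcal{\bar{K}}_i = \rho\dot{\mathcal{\bar{K}}}_i$ and substituting $\mathbf{U}_2',\mathbf{U}_3',\mathbf{U}_4'$ from (3.13), then collecting terms by basis vector, yields a clean decomposition of $\mathbf{U}_3''$. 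Under $\dot{\mathcal{\bar{K}}}_2=\dot{\mathcal{\bar{K}}}_3=0$, the $\mathbf{U}_3$-coefficient becomes $\rho\dot{\mathcal{\bar{K}}}_1+\mathcal{\bar{K}}_1^{2}-\mathcal{\bar{K}}_2^{2}-\mathcal{\bar{K}}_3^{2}$, which is exactly $-\varphi_1$, while the $\mathbf{U}_1,\mathbf{U}_2,\mathbf{U}_4$ coefficients reduce to $\varphi_2,\varphi_3,\varphi_4$. Moving the $\mathbf{U}_3$-term to the left side produces (4.11).

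For the converse, I would perform the same general computation of $\mathbf{U}_3''$ \emph{without} assuming that $\mathcal{\bar{K}}_2$ or $\mathcal{\bar{K}}_3$ are constant. This gives
\begin{equation*}
\mathbf{U}_3'' + \varphi_1 \mathbf{U}_3 = -\mathcal{\bar{K}}_2\mathbf{U}_1 + \bigl(-\rho\dot{\mathcal{\bar{K}}}_2 - 2\mathcal{\bar{K}}_1\mathcal{\bar{K}}_2\bigr)\mathbf{U}_2 + \bigl(\rho\dot{\mathcal{\bar{K}}}_3 + 2\mathcal{\bar{K}}_1\mathcal{\bar{K}}_3\bigr)\mathbf{U}_4.
\end{equation*}
Comparing this with the hypothesis (4.11), and using that $\{\mathbf{U}_1,\mathbf{U}_2,\mathbf{U}_3,\mathbf{U}_4\}$ is linearly independent, equating the $\mathbf{U}_2$-coefficients forces $\rho\dot{\mathcal{\bar{K}}}_2=0$, and equating the $\mathbf{U}_4$-coefficients forces $\rho\dot{\mathcal{\bar{K}}}_3=0$. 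Since $\rho=1/\bar{\kappa}_1\neq 0$, we conclude $\dot{\mathcal{\bar{K}}}_2=\dot{\mathcal{\bar{K}}}_3=0$, i.e.\ $\gamma$ is a general helix.

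The only real obstacle is bookkeeping in the expansion of $\mathbf{U}_3''$: each of $\mathbf{U}_2',\mathbf{U}_3',\mathbf{U}_4'$ contributes to three basis vectors and the sign pattern of the timelike Frenet equations (3.13) differs from that in the spacelike case of Theorem 4.2 (no $\mu_i$'s, but the antisymmetric $-\mathcal{\bar{K}}_2,-\mathcal{\bar{K}}_3$ entries play the analogous role). Otherwise the argument is entirely parallel to Theorem 4.2, and no new ideas are required.
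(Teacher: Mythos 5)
Your proposal is correct and follows essentially the same route as the paper's own proof: differentiate the third equiform Frenet equation of $(3.13)$, collect coefficients on $\{\mathbf{U}_{1},\mathbf{U}_{2},\mathbf{U}_{3},\mathbf{U}_{4}\}$, and read off $\overset{\cdot }{\mathcal{\bar{K}}_{2}}=\overset{\cdot }{\mathcal{\bar{K}}_{3}}=0$ in the converse by comparing with $(4.11)$. Your expansion of $\mathbf{U}_{3}^{\prime \prime }$ matches the paper's, and your explicit remark that $\rho \neq 0$ is needed to pass from $\rho \overset{\cdot }{\mathcal{\bar{K}}_{i}}=0$ to $\overset{\cdot }{\mathcal{\bar{K}}_{i}}=0$ is a small point the paper leaves implicit.
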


\begin{proof}
Suppose that the curve $\gamma $ is a general helix. Thus, we have%
\begin{equation}
\mathcal{\bar{K}}_{2}=const.\text{ and }\mathcal{\bar{K}}_{3}=const.
\end{equation}%
From $(3.13)$ and $(4.9)$ , it is easy to prove that the equation $(4.11)$
is satisfied.\newline
Conversely, we assume that the equation $(4.11)$ holds. Then from $(3.13)$,
it follows that%
\begin{equation}
\mathbf{U}_{3}^{^{\prime }}=-\mathcal{\bar{K}}_{2}\mathbf{U}_{2}+\mathcal{%
\bar{K}}_{1}\mathbf{U}_{3}+\mathcal{\bar{K}}_{3}\mathbf{U}_{4},
\end{equation}%
By differentiating $(4.9)$ with respect to $\sigma $, we get%
\begin{eqnarray*}
\mathbf{U}_{3}^{^{\prime \prime }} &=&-\mathcal{\bar{K}}_{2}\mathbf{U}%
_{1}-(\rho \overset{\cdot }{\mathcal{\bar{K}}_{2}}+2\mathcal{\bar{K}}_{1}%
\mathcal{\bar{K}}_{2})\mathbf{U}_{2} \\
&&+(\rho \overset{\cdot }{\mathcal{\bar{K}}_{1}}+\mathcal{\bar{K}}_{1}^{2}-%
\mathcal{\bar{K}}_{2}^{2}-\mathcal{\bar{K}}_{3}^{2})\mathbf{U}_{3} \\
&&+(\rho \overset{\cdot }{\mathcal{\bar{K}}_{3}}+2\mathcal{\bar{K}}_{1}%
\mathcal{\bar{K}}_{3})\mathbf{U}_{4},
\end{eqnarray*}%
so, we obtain%
\begin{equation*}
\overset{\cdot }{\mathcal{\bar{K}}_{2}}=0\text{ and }\overset{\cdot }{%
\mathcal{\bar{K}}_{3}}=0.
\end{equation*}%
It completes the proof.
\end{proof}

\section{Conclusion}

In the 4-dimensional Minkowski space, equiform differential geometry of
space and timelike curves are investigated. Frenet formulas in the equiform
differential geometry of the Minkowski $4$- space $E_{1}^{4}$ for these
curves are obtained. Moreover, we have characterized these curves using their
equiform curvatures $\mathcal{K}_{i}\ ,\mathcal{\bar{K}}_{i}\ \left(
i=1,2,3\right) $ in $E_{1}^{4}$ which have important geometric
interpretation.

\end{document}